\documentclass[10pt,a4paper,reqno,cite]{amsproc}
\usepackage{cite}
\usepackage{mathrsfs}
\newtheorem{theorem}{Theorem}[section]
\newtheorem{lemma}[theorem]{Lemma}
\newtheorem{corollary}[theorem]{Corollary}
\newtheorem{problem}[theorem]{Problem}

\theoremstyle{definition}

\theoremstyle{remark}

\theoremstyle{proposition}

\numberwithin{equation}{section}

\usepackage{amssymb}
\usepackage[mathscr]{eucal}
\begin{document}
\title[Mehler-Fock transform of arbitrary order]{The Mehler-Fock transform of arbitrary order}
\author [S. Ranjan]{Sudhanshu Ranjan$^1$}
\author [A. Prasad]{Akhilesh Prasad$^2$}

\thanks{{\textbf{}} \\	
$^{1,2}$Department of  Mathematics and Computing, Indian Institute of Technology (Indian School of Mines), Dhanbad-826004, India,
E-mail: sudhanshum$1997@$gmail.com$^{1}$, apr$\_$bhu$@$yahoo.com$^2$\\	
{\bf Corresponding Author:} Akhilesh Prasad$^2$, E-mail: apr$\_$bhu$@$yahoo.com	\\
\\
{\bf Acknowledgement:} This work was supported by National Board of Higher Mathematics, Govt. of India under sanction no. 02011/7/2021 NBHM (R.P)/{RD}-II/8164}

\subjclass[2010]{44A15, 44A35, 47B38 }
\date{}

\keywords{Mehler-Fock transform; Associated Legendre function; Function spaces}

\maketitle
\begin{abstract}
In this paper, we construct the Mehler–Fock transform of arbitrary order $\mathfrak{MF}_{\mu,k}$, by removing the restriction $\mathfrak{R}\mu>{-\frac{1}{2}}$ with the help of a positive integer $k$ such that $\mathfrak{R}(\mu+k)>{-\frac{1}{2}}$ and obtain some transformation formulae based on the differential operator $M_{\mu}N_{\mu}$. Furthermore, we define the Mehler–Fock translation and convolution operators of arbitrary order and demonstrate that the factorization property for the convolution holds in the test function space $ \mathbb{H}_\mu$. 
\end{abstract}

\section{Introduction }
The Mehler-Fock transform (MFT) has long been recognized as an effective tool in signal processing and probability theory, as well as for solving boundary value problems in mathematical physics and elasticity theory. It belongs to the class of index integral transforms, which are characterized by the dependence on one of the indices or parameters of the kernel function. Examples include the Kontorovich–Lebedev transform, the Whittaker transforms, the Lebedev-Skalskaya transform, and so on (see \cite {index, Maan,Gupt}). The MFT was initially introduced by Mehler \cite{mehler}, and its inversion formula was established by Fock \cite{fock}. Further in-depth investigations of the zero-order MFT have been studied by Gonz\'alez and Negr\'in\cite {Bn1}, Sneddon \cite{Sn}, Hayek and Gonz\'alez\cite {Ha2},  Yakubovich \cite {Yaku2}, Glaeske and Hess \cite{per}, Prasad et al. \cite{PS} etc. Additionally, the generalization of the MFT to the order $\mu$ has also been examined in detail by various researchers \cite{Hm,low,AP1,gonz3,Glaeske,Ha1,gonz1} so far.
The MFT of order $\mu\in \mathbb{C}$ with $\mathfrak{R}\mu>{-\frac{1}{2}}$ of a complex valued function $f$ is defined as \cite{gonz1}:
\begin{eqnarray}
(\mathfrak{MF}_\mu f)(\tau)=\int_1^\infty f(x) P_{-\frac{1}{2}+i\tau}^{-\mu}(x)dx,\quad \tau\in \mathbb{R}_+=(0,\infty), \label{1.1}
\end{eqnarray} 
and it can be inverted by 
\begin{eqnarray}
	f(x)=(\mathfrak{MF}^{-1}_\mu \mathfrak{MF}_\mu f)(x)= \frac{1}{\pi}\int_0^\infty\tau \sinh(\pi\tau)\Psi(\tau, \mu)P_{-\frac{1}{2}+i\tau}^{-\mu}(x)(\mathfrak{MF}_\mu f)(\tau)d\tau, \label{1.2}
\end{eqnarray}
where $x\in\mathbb{I}=(1,\infty)$, $\Psi(\tau,\mu)=\Gamma(\mu+\frac{1}{2}+i\tau)\Gamma(\mu+\frac{1}{2}-i\tau)$ and
$P_{-\frac{1}{2}+i\tau}^{-\mu}(x)$ is the associated Legendre function of first kind.\\
 In this paper, we introduce another method of extending the $\mu{th}$ order MFT to arbitrary negative real values of the order $\mu$.
\\

\textbf{Plan of the paper:} Section 2 presents recurrence relations for the associated Legendre function of the first kind, certain differential and integral operators, and the test function space $\mathbb{H}_\mu(\mathbb{I})$, which are used in the subsequent analysis. Section 3 is concerned with the MFT of arbitrary order, its inverse, and some of their properties. Section 4 is devoted to some operational calculus based on the differential operator $M_\mu N_\mu$. Section 5 deals with the Mehler-Fock convolution of arbitrary order and demonstrates the factorization property of this convolution in the space $ \mathbb{H}_\mu(\mathbb{I})$. Finally, Section 6 discusses applications of the arbitrary order MFT.
\section{Preliminary Results and Notations }
In this section, we present some preliminary results, notations, and known properties related to the $\mu$th order Mehler-Fock transform that will be used throughout the paper.
\\

As per \cite[ p. 259]{Ha1}, the product of functions  $P_{-\frac{1}{2}+i\tau}^{-\mu}$  states
\begin{eqnarray}
	P_{-\frac{1}{2}+i\tau}^{-\mu}(x)P_{-\frac{1}{2}+i\tau}^{-\mu}(y)=\int_1^\infty K_\mu(x,y,z)P_{-\frac{1}{2}+i\tau}^{-\mu}(z) dz,\label{2.1}
\end{eqnarray}
with a kernel that is symmetric in $x, y, z$,
\begin{eqnarray*}
	K_\mu(x,y,z) = \begin{cases}
		\frac{2^{\mu}\Gamma(\mu+1)}{\pi\Gamma(2\mu+1)}Y(x,y,z)^{\mu-1/2}[(x^2-1)(y^2-1)(z^2-1)]^{-\frac{\mu}{2}}, &\quad \text{for}\quad Y>0
		, \\ 
		0 &\quad \text{for}\quad Y\leq0,
	\end{cases}
\end{eqnarray*}
where 
\begin{eqnarray*}
Y(x,y,z)=2xyz+1-x^2-y^2-z^2. 
\end{eqnarray*}
Moreover, from \eqref{2.1} and \eqref{1.2}, $K_\mu(x,y,z)$ can be expressed as\\
\begin{eqnarray}
	K_\mu(x,y,z)=\frac{1}{\pi}\int_0^\infty\tau \sinh(\pi\tau)\Psi(\tau,\mu)P_{-\frac{1}{2}+i\tau}^{-\mu}(x)P_{-\frac{1}{2}+i\tau}^{-\mu}(y)P_{-\frac{1}{2}+i\tau}^{-\mu}(z)d\tau,\label{2.2}
\end{eqnarray}
where $\Psi(\tau,\mu)$ as above.\\
According to \cite{bry,AP2}, the recurrence relations of the associated Legendre function of first kind are as follows:
	\begin{eqnarray}
		(i)~  D_x^n [(x^2-1)^{-\frac{\mu}{2}}P_{-\frac{1}{2}+i\tau}^{-\mu}(x)]&=&(-1)^{\frac{n}{2}}(\mu+1/2+i\tau)_{n}(\mu+1/2-i\tau)_{n}\nonumber \\
		&\times&(x^2-1)^{-\frac{(\mu+n)}{2}}P_{-\frac{1}{2}+i\tau}^{-(\mu+n)}(x), \label{2.3}\\
		(ii) ~  D_x^n [(x^2-1)^{\frac{\mu}{2}}P_{-\frac{1}{2}+i\tau}^{-\mu}(x)]&=&(-1)^{\frac{n}{2}} (x^2-1)^{\frac{\mu-n}{2}}P_{-\frac{1}{2}+i\tau}^{-(\mu-n)}(x). \label{2.4}
	\end{eqnarray}
	where, $(\alpha)_n=\frac{\Gamma(\alpha+n)}{\Gamma(\alpha)},~ n \in {\mathbb{N}}$ and $ ~(\alpha)_0=1$.\\
	From \cite{AP2}, the Dirac delta function for MFT is given by 
	\begin{eqnarray*}
		\delta(x-y)=\frac{1}{\pi}\int\limits_0^\infty\tau \sinh(\pi\tau)\Gamma(\mu+\frac{1}{2}+i\tau)\Gamma(\mu+\frac{1}{2}-i\tau)P_{-\frac{1}{2}+i\tau}^{-\mu}(x)P_{-\frac{1}{2}+i\tau}^{-\mu}(y)d\tau.
	\end{eqnarray*}	
The test function space $ \mathbb{H}_\mu(\mathbb{I})$ of all complex-valued infinitely differentiable functions $\phi$ on $\mathbb{I}$ such that 
\begin{eqnarray}
	\mathfrak{Y}_{m,k}^\mu(\phi)=\sup\limits_{x\in\mathbb{I}}\bigg|(x^2-1)^m D_x^k (x^2-1)^{-\frac{\mu}{2}}\phi(x)\bigg|< \infty, \label{2.5}
\end{eqnarray}
 for all non-negative integers $m$ and $k$. $ \mathbb{H}_\mu(\mathbb{I})$ is equipped with the topology generated by the family of seminorms ${\mathfrak{Y}_{m,k}^\mu}$. $ \mathbb{H}_\mu(\mathbb{I})$ is sequentially complete and therefore a Frech\'et space. $\mathbb{H}_\mu^{\prime}(\mathbb{I})$ is the dual of $ \mathbb{H}_\mu(\mathbb{I})$. The members of $\mathbb{H}_\mu^{\prime}(\mathbb{I})$ are of slow growth as $x\to \infty$.
 We define the generalized Mehler-Fock transform $\mathfrak{MF}_\mu^{\prime}$ on $\mathbb{H}_\mu^{\prime}(\mathbb{I})$ by extending the Parseval relation as \cite{low}. In particular, if $~\mathfrak{R}\mu>{-\frac{1}{2}}$, $\Phi\in \mathbb{H}_\mu(\mathbb{I})$, $\phi=\mathfrak{MF}^{-1}_\mu\Phi$, and $f\in\mathbb{H}_\mu^{\prime}(\mathbb{I})$, then we define $F=\mathfrak{MF}_\mu^{\prime}f$ as a functional on $\mathbb{H}_\mu(\mathbb{I})$ by 
\begin{eqnarray}
	\langle F,\Phi \rangle	_{(\mathbb{R}_+;\frac{1}{\pi}\tau \sinh(\pi\tau)\Psi(\tau,\mu)d\tau)}=\langle f,\phi \rangle_{(\mathbb{I}; dx)}.\label{2.6}
\end{eqnarray}
Thus $\mathfrak{MF}_\mu^{\prime}$ is the adjoint of $\mathfrak{MF}^{-1}_\mu$ with respect to the weighted measure $(\mathbb{R}_+;\frac{1}{\pi}\tau\\ \sinh(\pi\tau)\Psi(\tau,\mu)d\tau)$.\\
The differential operators $M_\mu$, $N_\mu$ and the integral operators $M_\mu^{-1}$, $N_\mu^{-1}$ are defined in \cite{AP2} as
\begin{eqnarray}
 M_\mu\phi(x) &=& (x^2-1)^{-\frac{\mu}{2}}D_x(x^2-1)^{\frac{\mu+1}{2}}\phi(x) \label{2.7},
	\\
 N_\mu\phi(x) &=&(x^2-1)^{\frac{\mu+1}{2}}D_x(x^2-1)^{-\frac{\mu}{2}}\phi(x)\label{2.8},
	\\
 N_\mu^{-1}\phi(x) &=&(x^2-1)^{\frac{\mu}{2}}\int\limits_\infty^x (t^2-1)^{-\frac{(\mu+1)}{2}}\phi(t)dt \label{2.9},
	\\
 M_\mu^{-1}\phi(x) &=& (x^2-1)^{-\frac{(\mu+1)}{2}}\int\limits_\infty^x (t^2-1)^{\frac{\mu}{2}}\phi(t)dt.\label{2.10}
\end{eqnarray}
The differential operator $A_\mu$ is defined as
\begin{eqnarray}
	A_\mu = M_\mu N_\mu &=&(x^2-1)^{-\frac{\mu}{2}}D_x(x^2-1)^{\mu+1}D_x(x^2-1)^{-\frac{\mu}{2}} \nonumber\\
	&=&(x^2-1)D^2_x+2xD_x-\bigg[\mu(\mu+1)+{\frac{\mu^2}{x^2-1}}\bigg].\label{2.11}
\end{eqnarray}

\section{The Mehler-Fock transform of arbitrary order }
   The $\mu{th}$ order MFT \eqref{1.1} is defined with the restriction $\mathfrak{R}\mu>{-\frac{1}{2}}$. In this work, we have eliminated this restriction with the help of a positive integer $k$ such that $\mathfrak{R}(\mu+k)>{-\frac{1}{2}}$ and define the extended Mehler-Fock transform $\mathfrak{MF}_{\mu,k}$ of any function $\phi\in \mathbb{H}_\mu(\mathbb{I})$ as 
\begin{eqnarray}
\Phi(\tau):=(\mathfrak{MF}_{\mu,k} \phi)(\tau)=(-1)^{\frac{k}{2}}\big(\mathfrak{MF}_{\mu+k}N_{\mu+k-1}\cdots N_\mu \phi\big)(\tau), \label{3.1}
\end{eqnarray} 
and the inverse of extended Mehler-Fock transform $\mathfrak{MF}_{\mu,k}^{-1}$ of any $\Phi\in \mathbb{H}_\mu(\mathbb{I})$ is defined by
\begin{eqnarray}
	\phi(x):=(\mathfrak{MF}_{\mu,k}^{-1} \Phi)(x)=(-1)^{\frac{k}{2}}\big(N_\mu^{-1}\cdots N_{\mu+k-1}^{-1}\mathfrak{MF}_{\mu+k}^{-1} \Phi\big)(x), \label{3.2}
\end{eqnarray} 
where, $\mathfrak{MF}_{\mu+k}$ is defined as \eqref{1.1}, and $\mathfrak{MF}_{\mu+k}^{-1}$ is defined as \eqref{1.2}, wherein $\mu$ is replaced by $\mu+k$.\\
\begin{theorem}
	Let $\mu$ be a complex number and $k$ a positive integer such that $\mathfrak{R}(\mu+k)>{-\frac{1}{2}}$. On $\mathbb{H}_\mu(\mathbb{I})$, the extended Mehler-Fock transform $\mathfrak{MF}_{\mu,k}$ coincides with $\mathfrak{MF}_{\mu}$ defined by \eqref{1.1} whenever $\mathfrak{R}(\mu)>{-\frac{1}{2}}$.
	\begin{proof} 
		Suppose that $\phi \in\mathbb{H}_\mu(\mathbb{I})$, then consider the case for $k=1$:
		\begin{eqnarray*}
		(\mathfrak{MF}_{\mu,1} \phi)(\tau)&=&(-1)^{\frac{1}{2}}\big(\mathfrak{MF}_{\mu+1}N_{\mu}\phi\big)(\tau)\\
		&=&(-1)^{\frac{1}{2}}\int_1^\infty(x^2-1)^\frac{\mu+1}{2}P_{-\frac{1}{2}+i\tau}^{-(\mu+1)}(x)[D_x(x^2-1)^{-\frac{\mu}{2}}\phi(x)]dx.
		\end{eqnarray*}
		Integrating by parts and using the recurrence relation \eqref{2.4}, we get
		\begin{eqnarray*}
			(\mathfrak{MF}_{\mu,1} \phi)(\tau)
			&=&\int_1^\infty P_{-\frac{1}{2}+i\tau}^{-\mu}(x)\phi(x)dx=(\mathfrak{MF}_{\mu} \phi)(\tau).
		\end{eqnarray*}
	Therefore, $$(\mathfrak{MF}_{\mu}\phi)(\tau)=(-1)^{\frac{1}{2}}\big(\mathfrak{MF}_{\mu+1}N_{\mu}\phi\big)(\tau).$$
	Now for $k=2$:
		\begin{eqnarray*}
		(\mathfrak{MF}_{\mu,2} \phi)(\tau)&=&(-1)^{\frac{2}{2}}\big(\mathfrak{MF}_{\mu+2}N_{\mu+1}N_{\mu}\phi\big)(\tau)\\
		&=&(-1)\int_1^\infty(x^2-1)^\frac{\mu+2}{2}P_{-\frac{1}{2}+i\tau}^{-(\mu+2)}(x)[D_x^2(x^2-1)^{-\frac{\mu}{2}}\phi(x)]dx.
	\end{eqnarray*}
Integrating by parts two times and using the recurrence relation \eqref{2.4}, we get
\begin{eqnarray*}
	(\mathfrak{MF}_{\mu,2} \phi)(\tau)
	&=&\int_1^\infty P_{-\frac{1}{2}+i\tau}^{-\mu}(x)\phi(x)dx=(\mathfrak{MF}_{\mu} \phi)(\tau)
\end{eqnarray*}
Therefore, $$(\mathfrak{MF}_{\mu}\phi)(\tau)=(-1)^{\frac{2}{2}}\big(\mathfrak{MF}_{\mu+2}N_{\mu+1}N_{\mu}\phi\big)(\tau).$$
Similarly, by induction on $k$, we have 
\begin{eqnarray*}
	(\mathfrak{MF}_{\mu}\phi)(\tau)
	&=&(-1)^{\frac{k}{2}}\big(\mathfrak{MF}_{\mu+k}N_{\mu+k-1}\cdots N_\mu \phi\big)(\tau)\\
	&=&(\mathfrak{MF}_{\mu,k} \phi)(\tau).
	\end{eqnarray*}
	\end{proof}
\end{theorem}

\begin{corollary}
		For any positive integers $k$ and $p$ such that $k>p>-\mathfrak{R}(\mu)-\frac{1}{2}$, $\mathfrak{MF}_{\mu,k}=\mathfrak{MF}_{\mu,p}$ on $\mathbb{H}_\mu(\mathbb{I})$.
	\end{corollary}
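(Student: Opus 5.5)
The idea is to reduce the corollary to the one-step identity already obtained in the proof of the preceding Theorem, applied with a shifted order. Set $\nu=\mu+p$. Since $p>-\mathfrak{R}(\mu)-\frac{1}{2}$, we have $\mathfrak{R}(\nu)>-\frac12$, so $\mathfrak{MF}_{\nu}$ is the classical transform \eqref{1.1}. The proof of the Theorem (case $k=1$, and its inductive extension) gives, for every $\psi$ in the appropriate test space and every integer $j\ge 1$,
\begin{eqnarray*}
(\mathfrak{MF}_{\nu}\psi)(\tau)=(-1)^{\frac{j}{2}}\big(\mathfrak{MF}_{\nu+j}N_{\nu+j-1}\cdots N_{\nu}\psi\big)(\tau).
\end{eqnarray*}
This identity comes only from integrating by parts against \eqref{2.4}, and it holds because $\mathfrak{R}(\nu)>-\frac12$.

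We apply this with $j=k-p$ and $\psi=N_{\mu+p-1}\cdots N_\mu\phi$, where $\phi\in\mathbb{H}_\mu(\mathbb{I})$. By definition \eqref{3.1},
\begin{eqnarray*}
(\mathfrak{MF}_{\mu,k}\phi)(\tau)&=&(-1)^{\frac{k}{2}}\big(\mathfrak{MF}_{\mu+k}N_{\mu+k-1}\cdots N_{\mu+p}\,\psi\big)(\tau)\\
&=&(-1)^{\frac{p}{2}}(-1)^{\frac{k-p}{2}}\big(\mathfrak{MF}_{\nu+(k-p)}N_{\nu+k-p-1}\cdots N_{\nu}\,\psi\big)(\tau)\\
&=&(-1)^{\frac{p}{2}}(\mathfrak{MF}_{\nu}\psi)(\tau)=(\mathfrak{MF}_{\mu,p}\phi)(\tau).
\end{eqnarray*}
The splitting $(-1)^{k/2}=(-1)^{p/2}(-1)^{(k-p)/2}$ is consistent with the convention $(-1)^{1/2}=i$ used throughout.

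The main obstacle is to justify that $\psi$ lies in $\mathbb{H}_{\nu}(\mathbb{I})$, so that the Theorem's integration by parts is legitimate. The first step is to show that $N_\mu$ maps $\mathbb{H}_\mu(\mathbb{I})$ into $\mathbb{H}_{\mu+1}(\mathbb{I})$ continuously. We have $(x^2-1)^{-\frac{\mu+1}{2}}N_\mu\phi=D_x(x^2-1)^{-\frac{\mu}{2}}\phi$, so
\[
\mathfrak{Y}^{\mu+1}_{m,n}(N_\mu\phi)=\mathfrak{Y}^{\mu}_{m,n+1}(\phi).
\]
Iterating this gives $\psi\in\mathbb{H}_{\mu+p}(\mathbb{I})$.

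Finally, we check that the boundary terms vanish at $x=1$ and at $\infty$. At $\infty$ this follows from the rapid decay encoded in the seminorms \eqref{2.5}, together with the at most polynomial growth of $P^{-\nu}_{-\frac12+i\tau}$. At $x=1$ it follows from the behaviour $P^{-\nu}_{-\frac12+i\tau}(x)\sim (x-1)^{\nu/2}/\Gamma(1+\nu)$, since $\mathfrak{R}(\nu)>-\frac12$ keeps the product with $(x^2-1)^{\frac{\nu+1}{2}}$ times a bounded function going to $0$. This verification is exactly the one carried out in the Theorem, now with $\nu$ in place of $\mu$.
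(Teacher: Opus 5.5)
Your proposal is correct and follows the same route as the paper: you split the chain $N_{\mu+k-1}\cdots N_\mu$ at $N_{\mu+p}$, write $(-1)^{k/2}=(-1)^{p/2}(-1)^{(k-p)/2}$, and apply Theorem 3.1 at order $\mu+p$ (that is, $\mathfrak{MF}_{\mu+p,k-p}=\mathfrak{MF}_{\mu+p}$) to $N_{\mu+p-1}\cdots N_\mu\phi$. Your seminorm identity $\mathfrak{Y}^{\mu+1}_{m,n}(N_\mu\phi)=\mathfrak{Y}^{\mu}_{m,n+1}(\phi)$, which shows that this function lies in $\mathbb{H}_{\mu+p}(\mathbb{I})$, supplies a step the paper uses only implicitly when it invokes Theorem 3.1.
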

\begin{proof} According to the definition of $\mathfrak{MF}_{\mu,k}$ given in \eqref{3.1}, $\mathfrak{MF}_{\mu,k}$ is independent of the choice of $k$ provided that $\mathfrak{R}(\mu+k)>{-\frac{1}{2}}$. Indeed if $k>p>-\mathfrak{R}(\mu)-\frac{1}{2}$, then by Theorem 3.1, we have $\mathfrak{MF}_{\mu+p,k-p}=\mathfrak{MF}_{\mu+p}$, hence
	\begin{eqnarray*}
		&&(\mathfrak{MF}_{\mu,k} \phi)(\tau)\\
		&&=(-1)^{\frac{k}{2}}\big(\mathfrak{MF}_{\mu+k}N_{\mu+k-1}\cdots N_\mu \phi\big)(\tau)\\
		&&=(-1)^{\frac{k}{2}}\big(\mathfrak{MF}_{\mu+k}N_{\mu+k-1}\cdots N_{\mu+p}N_{\mu+p-1}\cdots N_\mu \phi\big)(\tau)\\
		&&=(-1)^{\frac{p}{2}}(-1)^{\frac{k-p}{2}}\big(\mathfrak{MF}_{\mu+p+k-p}N_{\mu+p+k-p-1}\cdots N_{\mu+p}N_{\mu+p-1}\cdots N_\mu \phi\big)(\tau)\\
		&&=(-1)^{\frac{p}{2}}\big(\mathfrak{MF}_{\mu+p,k-p}N_{\mu+p-1}\cdots N_\mu \phi\big)(\tau)\\
        &&=(-1)^{\frac{p}{2}}\big(\mathfrak{MF}_{\mu+p}N_{\mu+p-1}\cdots N_\mu \phi\big)(\tau)=(\mathfrak{MF}_{\mu,p} \phi)(\tau).
	\end{eqnarray*}
	\end{proof}
\begin{lemma}
	For $\Phi\in\mathbb{H}_\mu(\mathbb{I})$, we have
	\begin{eqnarray*}
		N_\mu^{-1}\cdots N_{\mu+k-2}^{-1}N_{\mu+k-1}^{-1}\big(\mathfrak{MF}_{\mu+k}^{-1} \Phi\big)(x)=(-1)^{-\frac{k}{2}}\big(\mathfrak{MF}_{\mu}^{-1} \Phi\big)(x).
	\end{eqnarray*}
\end{lemma}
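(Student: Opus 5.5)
The plan is to work at the level of the kernel and verify the single-step identity
\begin{eqnarray*}
N_{\nu}^{-1}\big(\mathfrak{MF}_{\nu+1}^{-1}\Phi\big)(x)=(-1)^{-\frac{1}{2}}\big(\mathfrak{MF}_{\nu}^{-1}\Phi\big)(x),
\end{eqnarray*}
for $\nu=\mu,\mu+1,\dots,\mu+k-1$. Iterating it $k$ times from the innermost operator outward, $N_{\mu+k-1}^{-1}$ first and $N_\mu^{-1}$ last, gives the lemma, with the factors $(-1)^{-\frac12}$ multiplying to $(-1)^{-\frac{k}{2}}$. The induction is on $k$. The base case $k=1$ is exactly the single-step identity, and the inductive step applies it with $\nu=\mu$ to the output of the case $k-1$ taken at order $\mu+1$.

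For the single step, write out $\mathfrak{MF}_{\nu+1}^{-1}\Phi$ by \eqref{1.2} with $\mu$ replaced by $\nu+1$, and apply the integral operator \eqref{2.9}. Interchanging the $t$-integral with the $\tau$-integral reduces everything to the kernel computation
\begin{eqnarray*}
(x^2-1)^{\frac{\nu}{2}}\int_\infty^x (t^2-1)^{-\frac{\nu+1}{2}}P^{-(\nu+1)}_{-\frac12+i\tau}(t)\,dt .
\end{eqnarray*}
Recurrence \eqref{2.3} with $n=1$ says that
\begin{eqnarray*}
D_t\big[(t^2-1)^{-\frac{\nu}{2}}P^{-\nu}_{-\frac12+i\tau}(t)\big]=(-1)^{\frac12}(\nu+\tfrac12+i\tau)(\nu+\tfrac12-i\tau)\,(t^2-1)^{-\frac{\nu+1}{2}}P^{-(\nu+1)}_{-\frac12+i\tau}(t).
\end{eqnarray*}
So the integrand is an exact derivative. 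Integrating from $\infty$ to $x$ yields
\begin{eqnarray*}
(-1)^{-\frac12}\big[(\nu+\tfrac12)^2+\tau^2\big]^{-1}P^{-\nu}_{-\frac12+i\tau}(x),
\end{eqnarray*}
provided the boundary term $(t^2-1)^{-\nu/2}P^{-\nu}_{-1/2+i\tau}(t)$ vanishes as $t\to\infty$. The factor $\big[(\nu+\frac12)^2+\tau^2\big]^{-1}=\big[(\nu+\frac12+i\tau)(\nu+\frac12-i\tau)\big]^{-1}$ then cancels against $\Psi(\tau,\nu+1)=\Gamma(\nu+\frac32+i\tau)\Gamma(\nu+\frac32-i\tau)$, leaving exactly $\Psi(\tau,\nu)$ by $\Gamma(z+1)=z\Gamma(z)$. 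This reproduces the integrand of $\mathfrak{MF}_\nu^{-1}\Phi$ in \eqref{1.2} times $(-1)^{-\frac12}$, which is the single-step identity.

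The main obstacle is analytic rather than algebraic. The algebra is formal for every $\nu$, but two things need justification. First, the interchange of the $t$ and $\tau$ integrals requires absolute convergence, for which I would use the rapid decay of $\Phi$ as a member of $\mathbb{H}_\mu$ together with standard bounds on $P^{-\nu}_{-1/2+i\tau}$. Second, the boundary term at infinity must vanish. Since $P^{-\nu}_{-1/2+i\tau}(t)=O(t^{-1/2})$, the boundary term behaves like $t^{-\mathfrak{R}\nu-\frac12}$. This vanishes when $\mathfrak{R}\nu>-\frac12$, but for negative orders with $\mathfrak{R}\nu\le-\frac12$ it is not automatic. There I would instead argue at the level of the whole inverse transform, using that $\mathfrak{MF}_{\nu+1}^{-1}\Phi$ decays rapidly so that the outer integral in \eqref{2.9} converges. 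The identity $\mathfrak{MF}_\nu^{-1}\Phi=(-1)^{\frac12}N_\nu\,\mathfrak{MF}_{\nu+1}^{-1}\Phi$ can be checked by differentiating under the integral sign with \eqref{2.4}/\eqref{2.3}, and then $N_\nu^{-1}N_\nu=I$ on functions vanishing at infinity. This route is a fallback I would still need to justify carefully.
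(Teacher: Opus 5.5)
Your main argument is the paper's proof. The paper also expands $N_{\mu+k-1}^{-1}\mathfrak{MF}_{\mu+k}^{-1}\Phi$ via \eqref{2.9} and \eqref{1.2}, swaps the $t$- and $\tau$-integrals, and recognizes the $t$-integrand as an exact derivative by \eqref{2.3} with $n=1$. It then absorbs $[(\nu+\frac12)^2+\tau^2]^{-1}$ into $\Psi(\tau,\nu+1)$ to leave $\Psi(\tau,\nu)$, and repeats from the innermost operator outward.

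The analytic point you raise is a genuine omission in the paper too, which simply evaluates $\int_\infty^x D_t[\cdots]\,dt$ at the upper limit. Both the interchange and the vanishing of $(t^2-1)^{-\nu/2}P^{-\nu}_{-\frac12+i\tau}(t)$ as $t\to\infty$ require $\mathfrak{R}\nu>-\frac12$. So this route, yours or the paper's, proves the lemma when $\mathfrak{R}\mu>-\frac12$, since then every order $\mu,\dots,\mu+k-1$ qualifies. That is the case used in Theorem 3.4.

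Your fallback for $\mathfrak{R}\nu\le-\frac12$ does not close this gap.

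First, the displayed identity is reversed. By \eqref{2.3}, $N_\nu P^{-\nu}_{-\frac12+i\tau}=(-1)^{\frac12}\big[(\nu+\frac12)^2+\tau^2\big]P^{-(\nu+1)}_{-\frac12+i\tau}$, so differentiating under the integral sign gives $N_\nu\,\mathfrak{MF}_\nu^{-1}\Phi=(-1)^{\frac12}\mathfrak{MF}_{\nu+1}^{-1}\Phi$.

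Second, and more seriously, $N_\nu^{-1}N_\nu=I$ fails on functions vanishing at infinity once $\mathfrak{R}\nu<0$. Indeed $(x^2-1)^{\nu/2}$ vanishes at infinity and is annihilated by $N_\nu$. In general
\[
N_\nu^{-1}N_\nu\psi=\psi-L\,(x^2-1)^{\frac{\nu}{2}},\qquad L=\lim_{t\to\infty}(t^2-1)^{-\frac{\nu}{2}}\psi(t).
\]
So for $\psi=\mathfrak{MF}_\nu^{-1}\Phi$ you are back to the same boundary limit, now integrated against $\Phi$. For $\mathfrak{R}\nu<-\frac12$ nothing forces $L=0$.

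Third, for a general rapidly decreasing $\Phi$, the function $\mathfrak{MF}_{\nu+1}^{-1}\Phi$ is not rapidly decreasing in $x$. It decays roughly like its kernel, i.e.\ like $x^{-1/2}$ up to logarithmic factors. Hence for $\mathfrak{R}\nu<-\frac12$ even the integral \eqref{2.9} defining $N_\nu^{-1}\mathfrak{MF}_{\nu+1}^{-1}\Phi$ need not converge.

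For $\mathfrak{R}\mu<-\frac12$ the lemma is therefore only a formal identity, in the paper and in your proposal alike.
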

\begin{proof}
 Using \eqref{2.9} and \eqref{1.2}, we have
 	\begin{eqnarray*}
 	&&N_{\mu+k-1}^{-1}\big(\mathfrak{MF}_{\mu+k}^{-1} \Phi\big)(x)\\
 	&&=(x^2-1)^{\frac{\mu+k-1}{2}}\int_\infty^x (t^2-1)^{-\frac{(\mu+k)}{2}} (\mathfrak{MF} ^{-1}_{\mu+k} \Phi)(t)dt\\
 	&&=(x^2-1)^{\frac{\mu+k-1}{2}}\frac{1}{\pi}\int^\infty_0 \int_\infty^x	(t^2-1)^{-\frac{(\mu+k)}{2}}P_{-\frac{1}{2}+i\tau}^{-(\mu+k)}(t)\Gamma\Big(\mu+k+\frac{1}{2}+i\tau\Big)\\ 
 	&&\hspace{0.5cm}\times ~\Gamma\Big(\mu+k+\frac{1}{2}-i\tau\Big)~\tau \sinh(\pi\tau)\Phi(\tau)
  dt ~d\tau.
 \end{eqnarray*}
In view of the recurrence relation \eqref{2.3}, we have
\begin{align*}
	D_t [(t^2-1)^{-\frac{(\mu+k-1)}{2}}P_{-\frac{1}{2}+i\tau}^{-(\mu+k-1)}(t)]&=(-1)^{\frac{1}{2}}\Big(\mu+k-\frac{1}{2}+i\tau\Big)_{1}\Big(\mu+k-\frac{1}{2}-i\tau\Big)_{1}\\
	&\quad \times (t^2-1)^{-\frac{(\mu+k)}{2}}P_{-\frac{1}{2}+i\tau}^{-(\mu+k)}(t), 
\end{align*}
and hence
	\begin{eqnarray*}
	&&N_{\mu+k-1}^{-1}\big(\mathfrak{MF}_{\mu+k}^{-1} \Phi\big)(x)\\
	&&=(-1)^{-\frac{1}{2}}(x^2-1)^{\frac{\mu+k-1}{2}}\frac{1}{\pi}\int_0^\infty \int_\infty^x D_t [(t^2-1)^{-\frac{(\mu+k-1)}{2}}P_{-\frac{1}{2}+i\tau}^{-(\mu+k-1)}(t)]\\
	&&\hspace{0.5cm}\times \Gamma\Big(\mu+k-\frac{1}{2}+i\tau\Big) \Gamma\Big(\mu+k-\frac{1}{2}-i\tau\Big)~\tau \sinh(\pi\tau)\Phi(\tau)
	dt ~d\tau.\\
	&&=(-1)^{-\frac{1}{2}}(x^2-1)^{\frac{\mu+k-1}{2}}\frac{1}{\pi}\int_0^\infty(x^2-1)^{-\frac{(\mu+k-1)}{2}}P_{-\frac{1}{2}+i\tau}^{-(\mu+k-1)}(x)\\
	&&\hspace{0.5cm}\times \Gamma\Big(\mu+k-\frac{1}{2}+i\tau\Big) \Gamma\Big(\mu+k-\frac{1}{2}-i\tau\Big)~\tau \sinh(\pi\tau)\Phi(\tau)
	 ~d\tau\\
	&&=(-1)^{-\frac{1}{2}}\big(\mathfrak{MF}_{\mu+k-1}^{-1} \Phi\big)(x).
\end{eqnarray*}
Again 
	\begin{eqnarray*}
	&&N_{\mu+k-2}^{-1}N_{\mu+k-1}^{-1}\big(\mathfrak{MF}_{\mu+k}^{-1} \Phi\big)(x)\\
	&&=(x^2-1)^{\frac{\mu+k-2}{2}}\int_\infty^x (t^2-1)^{-\frac{(\mu+k-1)}{2}} N_{\mu+k-1}^{-1}(\mathfrak{MF} ^{-1}_{\mu+k} \Phi)(t)dt\\
	&&=(-1)^{-\frac{1}{2}}(x^2-1)^{\frac{\mu+k-2}{2}}\frac{1}{\pi}\int^\infty_0 \int_\infty^x	(t^2-1)^{-\frac{(\mu+k-1)}{2}}P_{-\frac{1}{2}+i\tau}^{-(\mu+k-1)}(t)\\ 
	&&\hspace{0.5cm}\times ~\Gamma\Big(\mu+k-\frac{1}{2}+i\tau\Big)\Gamma\Big(\mu+k-\frac{1}{2}-i\tau\Big) ~\tau \sinh(\pi\tau)\Phi(\tau)
	dt ~d\tau.
\end{eqnarray*}
Using the recurrence relation \eqref{2.3}, we have
\begin{align*}
	D_t [(t^2-1)^{-\frac{(\mu+k-2)}{2}}P_{-\frac{1}{2}+i\tau}^{-(\mu+k-2)}(t)]&=(-1)^{\frac{1}{2}}\Big(\mu+k-\frac{3}{2}+i\tau\Big)_{1}\Big(\mu+k-\frac{3}{2}-i\tau\Big)_{1}\\
	&\quad \times (t^2-1)^{-\frac{(\mu+k-1)}{2}}P_{-\frac{1}{2}+i\tau}^{-(\mu+k-1)}(t), 
\end{align*}
and hence
\begin{align*}
	N_{\mu+k-2}^{-1}N_{\mu+k-1}^{-1}\big(\mathfrak{MF}_{\mu+k}^{-1} \Phi\big)(x)
    &=(-1)^{-\frac{2}{2}}\frac{1}{\pi}\int_0^\infty P_{-\frac{1}{2}+i\tau}^{-(\mu+k-2)}(x)\Gamma\Big(\mu+k-\frac{3}{2}+i\tau\Big)\\
    &\quad \times  ~\Gamma\Big(\mu+k-\frac{3}{2}-i\tau\Big)\tau \sinh(\pi\tau)\Phi(\tau)~d\tau\\
&=(-1)^{-\frac{2}{2}}\big(\mathfrak{MF}_{\mu+k-2}^{-1} \Phi\big)(x).
\end{align*}
Proceeding similarly, we can show that
	\begin{eqnarray*}
	N_\mu^{-1}\cdots N_{\mu+k-2}^{-1}N_{\mu+k-1}^{-1}\big(\mathfrak{MF}_{\mu+k}^{-1} \Phi\big)(x)=(-1)^{-\frac{k}{2}}\big(\mathfrak{MF}_{\mu}^{-1} \Phi\big)(x).
\end{eqnarray*}
\end{proof}
\begin{theorem}
	Let $\mu$ be a complex number and $k$ a positive integer such that $\mathfrak{R}(\mu+k)>{-\frac{1}{2}}$. On $\mathbb{H}_\mu(\mathbb{I})$, the inverse $\mathfrak{MF}_{\mu,k}^{-1}$ of the extended Mehler-Fock transform  coincides with $\mathfrak{MF}_{\mu}^{-1}$ defined by \eqref{1.2} whenever $\mathfrak{R}(\mu)>{-\frac{1}{2}}$.
\begin{proof}
	By definition of $\mathfrak{MF}_{\mu,k}^{-1}$ and Lemma 3.3, we get the desired result 
\begin{eqnarray*}
	(\mathfrak{MF}_{\mu,k}^{-1}\Phi)(x)=(-1)^{\frac{k}{2}}N_\mu^{-1}\cdots N_{\mu+k-2}^{-1}N_{\mu+k-1}^{-1}\big(\mathfrak{MF}_{\mu+k}^{-1} \Phi\big)(x)=\big(\mathfrak{MF}_{\mu}^{-1} \Phi\big)(x).
\end{eqnarray*}
\end{proof}
\end{theorem}
A Mehler-Fock transformation of order $\mu$ specified on some generalized functions can be obtained by accessing the dual spaces. Let $f\in\mathbb{H}_\mu^{\prime}(\mathbb{I})$, where $\mu$ is arbitrary. We define the generalized Mehler-Fock transform
$\mathfrak{MF}_\mu^{\prime}$ on $\mathbb{H}_\mu^{\prime}(\mathbb{I})$ as adjoint of $\mathfrak{MF}_\mu^{-1}=\mathfrak{MF}_{\mu,k}^{-1}$ on $\mathbb{H}_\mu(\mathbb{I})$, where $k$ is a positive integer greater than $-\mathfrak{R}(\mu)-\frac{1}{2}$.
That is, for $\Phi\in \mathbb{H}_\mu(\mathbb{I})$ and $\phi=\mathfrak{MF}^{-1}_\mu\Phi=\mathfrak{MF}^{-1}_{\mu,k}\Phi$, the Mehler-Fock transform $F=\mathfrak{MF}_\mu^{\prime}f$ of any $f\in\mathbb{H}_\mu^{\prime}(\mathbb{I})$ is defined as a functional on $\mathbb{H}_\mu(\mathbb{I})$ by \eqref{2.6}; namely
\begin{eqnarray}
	\langle \mathfrak{MF}_\mu^{\prime}f,\Phi \rangle_{(\mathbb{R}_+;\frac{1}{\pi}\tau \sinh(\pi\tau)\Psi(\tau,\mu)d\tau)}=\langle f,\mathfrak{MF}^{-1}_{\mu,k}\Phi \rangle_{(\mathbb{I}; dx)}.\label{3.3}
\end{eqnarray}
In view of \eqref{3.3}, the generalized inverse Mehler-Fock transform $(\mathfrak{MF}_\mu^{\prime})^{-1}$ for any complex $\mu$ is defined by
\begin{eqnarray}
	\langle (\mathfrak{MF}_\mu^{\prime})^{-1}F,\phi \rangle_{(\mathbb{I}; dx)}=\langle F,\Phi \rangle_{(\mathbb{R}_+;\frac{1}{\pi}\tau \sinh(\pi\tau)\Psi(\tau,\mu)d\tau)}.\label{3.4}
\end{eqnarray}
\section{Some operational calculus }
In this section, we establish certain operational properties associated with the differential operator $M_\mu N_\mu$. These results describe the action of the operator in the framework of the arbitrary order Mehler-Fock transform and will be useful in the subsequent analysis.
\begin{theorem}
	Let $\mu$ be a fixed complex number and $k$ a positive integer such that $\mathfrak{R}(\mu+k)>{-\frac{1}{2}}$. Then for $\Phi\in\mathbb{H}_\mu(\mathbb{I})$, 
	\begin{eqnarray}
	M_\mu N_\mu (\mathfrak{MF}_{\mu,k}^{-1}\Phi)(x)=(-1)\bigg[\mathfrak{MF}_{\mu,k}^{-1}\bigg(\Big(\mu+\frac{1}{2}\Big)^2+\tau^2\bigg)\Phi\bigg](x). \label{4.1}
	\end{eqnarray}
	\begin{proof}
		Using \eqref{2.11}, \eqref{3.2} and  Lemma 3.3, we have
			\begin{eqnarray}	
				&&M_\mu N_\mu(\mathfrak{MF}_{\mu,k}^{-1}\Phi)(x)\nonumber\\
			&&=(x^2-1)^{-\frac{\mu}{2}}D_x(x^2-1)^{\mu+1}D_x(x^2-1)^{-\frac{\mu}{2}}\nonumber
			\\
			&&\hspace{0.5cm}\times(-1)^{\frac{k}{2}}N_\mu^{-1}\cdots N_{\mu+k-2}^{-1}N_{\mu+k-1}^{-1}\big(\mathfrak{MF}_{\mu+k}^{-1} \Phi\big)(x)\nonumber\\
			&&=(x^2-1)^{-\frac{\mu}{2}}D_x(x^2-1)^{\mu+1}D_x(x^2-1)^{-\frac{\mu}{2}}\nonumber
			\\
			&&\hspace{0.5cm}\times \frac{1}{\pi}\int\limits_0^\infty \tau \sinh(\pi\tau)\Gamma\Big(\mu+\frac{1}{2}+i\tau\Big)\Gamma\Big(\mu+\frac{1}{2}-i\tau\Big)P_{-\frac{1}{2}+i\tau}^{-\mu}(x)\Phi(\tau)d\tau\nonumber.
		\end{eqnarray}
	Now using \eqref{2.3} and \eqref{2.4}, we have
	\begin{eqnarray}
	M_\mu N_\mu(\mathfrak{MF}_{\mu,k}^{-1}\Phi)(x)\nonumber&=&\frac{(-1)}{\pi}\int_0^\infty \tau \sinh(\pi\tau)\Gamma\Big(\mu+\frac{1}{2}+i\tau\Big)\Gamma\Big(\mu+\frac{1}{2}-i\tau\Big)\nonumber
	\\
	&&\hspace{0.5cm}\times P_{-\frac{1}{2}+i\tau}^{-\mu}(x)\bigg(\Big(\mu+\frac{1}{2}\Big)^2+\tau^2\bigg)\Phi(\tau)d\tau.\label{4.2}
\end{eqnarray}
Now consider the right-hand side of \eqref{4.1}. Applying the definition of $\mathfrak{MF}_{\mu,k}^{-1}$ and using the following relation
	\begin{eqnarray*}
	&&N_\mu^{-1}\cdots N_{\mu+k-2}^{-1}N_{\mu+k-1}^{-1}\bigg[\mathfrak{MF}_{\mu+k}^{-1} \bigg(\Big(\mu+\frac{1}{2}\Big)^2+\tau^2\bigg)\Phi\bigg](x)\nonumber\\
	&&=(-1)^{-\frac{k}{2}}\frac{1}{\pi}\int_0^\infty \tau \sinh(\pi\tau)\Gamma\Big(\mu+\frac{1}{2}+i\tau\Big)\Gamma\Big(\mu+\frac{1}{2}-i\tau\Big)\nonumber
	\\
	&&\hspace{0.5cm}\times P_{-\frac{1}{2}+i\tau}^{-\mu}(x)\bigg(\Big(\mu+\frac{1}{2}\Big)^2+\tau^2\bigg)\Phi(\tau)d\tau,
\end{eqnarray*}
we have
\begin{eqnarray}
	&&(-1)\bigg[\mathfrak{MF}_{\mu,k}^{-1}\bigg(\Big(\mu+\frac{1}{2}\Big)^2+\tau^2\bigg)\Phi\bigg](x)\nonumber\\
	&&=\frac{(-1)}{\pi}\int_0^\infty \tau \sinh(\pi\tau)\Gamma\Big(\mu+\frac{1}{2}+i\tau\Big)\Gamma\Big(\mu+\frac{1}{2}-i\tau\Big)\nonumber
	\\
	&&\times P_{-\frac{1}{2}+i\tau}^{-\mu}(x)\bigg(\Big(\mu+\frac{1}{2}\Big)^2+\tau^2\bigg)\Phi(\tau)d\tau.\label{4.3}
\end{eqnarray}
Therefore, from \eqref{4.2} and \eqref{4.3}, we get the desired result \eqref{4.1}.
	\end{proof}
\end{theorem}
\textbf{Note:} For $r\in \mathbb{N}_{0}$, we have 
$$ (M_\mu N_\mu)^r(\mathfrak{MF}_{\mu,k}^{-1}\Phi)(x)=(-1)^r\bigg[\mathfrak{MF}_{\mu,k}^{-1}\bigg(\Big(\mu+\frac{1}{2}\Big)^2+\tau^2\bigg)^r\Phi\bigg](x).
$$
\begin{theorem}
	Let $\mu$ be a fixed complex number and $k$ be any positive integer such that $\mathfrak{R}(\mu+k)>{-\frac{1}{2}}$. Then for $\phi\in\mathbb{H}_\mu(\mathbb{I})$, 
	\begin{eqnarray}
\big(\mathfrak{MF}_{\mu+1,k}N_\mu\phi\big)(\tau)=(-1)^{-\frac{1}{2}}(\mathfrak{MF}_{\mu,k}\phi)(\tau).\label{4.4}
	\end{eqnarray}	
	\begin{proof}Let $\phi\in\mathbb{H}_\mu(\mathbb{I})$. Then using \eqref{3.1} and Corollary 3.2, we have
		\begin{eqnarray*}
			\big(\mathfrak{MF}_{\mu+1,k}N_\mu\phi\big)(\tau)&=&(-1)^{\frac{k}{2}}\big(\mathfrak{MF}_{\mu+1+k}N_{\mu+k}\cdots N_{\mu+1}(N_\mu \phi)\big)(\tau)\\
			&=&(-1)^{-\frac{1}{2}}\big(\mathfrak{MF}_{\mu,k+1}\phi\big)(\tau)\\
			&=&(-1)^{-\frac{1}{2}}\big(\mathfrak{MF}_{\mu,k}\phi\big)(\tau).
		\end{eqnarray*}	
	\end{proof}
\end{theorem}
\begin{theorem}
	Let $\mu$ be a fixed complex number and $k$ a positive integer such that $\mathfrak{R}(\mu+k)>{-\frac{1}{2}}$. Then\\\\
	(i) for $\phi \in\mathbb{H}_{\mu+1}(\mathbb{I})$, we have
\begin{eqnarray}
 \big(\mathfrak{MF}_{\mu,k}M_\mu\phi\big)(\tau)
	=(-1)^{-\frac{1}{2}}\bigg[\Big(\mu+\frac{1}{2}\Big)^2+\tau^2\bigg]\big(\mathfrak{MF}_{\mu+1,k}\phi\big)(\tau).  \label{4.5}
\end{eqnarray}
	(ii)~ For $\phi \in\mathbb{H}_{\mu}(\mathbb{I})$, we have
	\begin{eqnarray}
		(\mathfrak{MF}_{\mu,k}A_\mu\phi)(\tau)=(-1)\bigg[\Big(\mu+\frac{1}{2}\Big)^2+\tau^2\bigg](\mathfrak{MF}_{\mu,k}\phi)(\tau). \label{4.6}
	\end{eqnarray}	 	
	\begin{proof}(i) From \cite[Lemma 2.5(ii)]{AP2}, we have 
			\begin{eqnarray}
			N_{\mu+k-1}\cdots N_{\mu+1}N_\mu{\phi(x)}=(x^2-1)^{\frac{\mu+k}{2}}D_x^k(x^2-1)^{-\frac{\mu}{2}}\phi(x).\label{4.7}
		\end{eqnarray}
	Now using \eqref{3.1}, \eqref{2.7} and \eqref{4.7}, we get
		\begin{eqnarray*}
			\big(\mathfrak{MF}_{\mu,k}M_\mu\phi\big)(\tau)&=&(-1)^{\frac{k}{2}}\int_1^\infty P_{-\frac{1}{2}+i\tau}^{-(\mu+k)}(x)(x^2-1)^\frac{\mu+k}{2} D_x^k(x^2-1)^{-\mu}\\
			&&\times D_x(x^2-1)^\frac{\mu+1}{2}\phi(x)dx.
		\end{eqnarray*}	
	Integrating by parts $k$ times and using \eqref{2.4}, we have
		\begin{eqnarray*}
		\big(\mathfrak{MF}_{\mu,k}M_\mu\phi\big)(\tau)&=&(-1)^{\frac{3k}{2}}\int_1^\infty D_x^k\big[(x^2-1)^\frac{\mu+k}{2}P_{-\frac{1}{2}+i\tau}^{-(\mu+k)}(x)\big](x^2-1)^{-\mu}\\
		&&\times D_x(x^2-1)^\frac{\mu+1}{2}\phi(x)dx\\
		&=&\int_1^\infty P_{-\frac{1}{2}+i\tau}^{-\mu}(x)(x^2-1)^{-\frac{\mu}{2}} D_x(x^2-1)^\frac{\mu+1}{2}\phi(x)dx
		\end{eqnarray*}	
Again Integrating by parts and using \eqref{2.3}, we have
\begin{eqnarray}
\big(\mathfrak{MF}_{\mu,k}M_\mu\phi\big)(\tau)=(-1)^{-\frac{1}{2}}\bigg[\Big(\mu+\frac{1}{2}\Big)^2+\tau^2\bigg]\int_1^\infty P_{-\frac{1}{2}+i\tau}^{-(\mu+1)}(x)\phi(x)dx.\label{4.8}
\end{eqnarray}	
We now consider the right side of \eqref{4.5}. Applying the definition of $\mathfrak{MF}_{\mu+1,k}$ and using the following relation 
\begin{eqnarray*}
	N_{\mu+k}N_{\mu+k-1}\cdots N_{\mu+1}\phi(x)=(x^2-1)^{\frac{\mu+k+1}{2}}D_x^k(x^2-1)^{-\frac{(\mu+1)}{2}}\phi(x),
\end{eqnarray*}
we have
\begin{eqnarray*}
\big(\mathfrak{MF}_{\mu+1,k}\phi\big)(\tau)
&=&\big(\mathfrak{MF}_{\mu+1+k}N_{\mu+k}N_{\mu+k-1}\cdots N_{\mu+1} \phi\big)(\tau)\\
&=&\int_1^\infty P_{-\frac{1}{2}+i\tau}^{-(\mu+k+1)}(x)(x^2-1)^\frac{\mu+k+1}{2}
D_x^k(x^2-1)^{-\frac{(\mu+1)}{2}}\phi(x)dx.
\end{eqnarray*}
Now integrating by parts $k$ times and using \eqref{2.4}, we get
\begin{eqnarray}
	\big(\mathfrak{MF}_{\mu+1,k}\phi\big)(\tau)\nonumber
	&=&\int_1^\infty D_x^k \big[(x^2-1)^\frac{\mu+k+1}{2}P_{-\frac{1}{2}+i\tau}^{-(\mu+k+1)}(x)\big]\nonumber (x^2-1)^{-\frac{(\mu+1)}{2}}\phi(x)dx \nonumber\\
	&=&\int_1^\infty P_{-\frac{1}{2}+i\tau}^{-(\mu+1)}(x)\phi(x)dx.\label{4.9}
\end{eqnarray}
Therefore, from \eqref{4.8} and \eqref{4.9}, we get the desired result \eqref{4.5}.\\\\
(ii) By combining \eqref{4.4} and \eqref{4.5}, we get \eqref{4.6}.
	\end{proof}
\end{theorem}
The analogue to Theorem 4.1 in the dual space is 
\begin{theorem}
	For $f\in \mathbb{H}_{\mu}^{\prime}(\mathbb{I})$, we have 
		\begin{eqnarray}
		(\mathfrak{MF}_{\mu}^{\prime}M_\mu N_\mu f)(\tau)=-\bigg[\Big(\mu+\frac{1}{2}\Big)^2+\tau^2\bigg](\mathfrak{MF}_{\mu}^{\prime}f)(\tau). \label{4.10}
	\end{eqnarray}
\begin{proof}
	Let $\Phi\in\mathbb{H}_{\mu}(\mathbb{I})$, and $k$ be any positive integer such that $\mathfrak{R}(\mu+k)>{-\frac{1}{2}}$. Then, by the definition of $\mathfrak{MF}_{\mu}^{\prime}$, and $M_\mu N_\mu$ when acting on $\mathbb{H}_{\mu}^{\prime}(\mathbb{I})$, and using Theorem 4.1, we have 	
		\begin{eqnarray*}
		&&\langle \mathfrak{MF}_{\mu}^{\prime}M_\mu N_\mu f,    \Phi\rangle_{(\mathbb{R}_+;\frac{1}{\pi}\tau \sinh(\pi\tau)\Psi(\tau,\mu)d\tau)}\\
		&&=\langle M_\mu N_\mu f,  \mathfrak{MF}_{\mu,k}^{-1}\Phi\rangle_{(\mathbb{I}; dx)}\\
		&&=\langle f,  M_\mu N_\mu \mathfrak{MF}_{\mu,k}^{-1}\Phi\rangle_{(\mathbb{I}; dx)}\\
		&&=\bigg\langle f, \mathfrak{MF}_{\mu,k}^{-1}\bigg(-\Big(\mu+\frac{1}{2}\Big)^2-\tau^2\bigg)\Phi\bigg\rangle_{\big(\mathbb{I}; dx\big)}\\
		&&=\bigg\langle \mathfrak{MF}_{\mu}^{\prime} f, \bigg(-\Big(\mu+\frac{1}{2}\Big)^2-\tau^2\bigg)\Phi\bigg\rangle_{\big(\mathbb{R}_+; \frac{1}{\pi}\tau \sinh(\pi\tau)\Psi(\tau,\mu)d\tau\big)}.
	\end{eqnarray*}
But, multiplication by $-\bigg[\Big(\mu+\frac{1}{2}\Big)^2+\tau^2\bigg]$ in $\mathbb{H}_{\mu}^{\prime}(\mathbb{I})$ is the adjoint of  multiplication by $-\bigg[\Big(\mu+\frac{1}{2}\Big)^2+\tau^2\bigg]$ in $\mathbb{H}_{\mu}(\mathbb{I})$. Thus the last expression is equal to 
\begin{eqnarray*}
\bigg\langle -\bigg[\Big(\mu+\frac{1}{2}\Big)^2+\tau^2\bigg]\mathfrak{MF}_{\mu}^{\prime} f, \Phi\bigg\rangle_{\big(\mathbb{R}_+;\frac{1}{\pi}\tau \sinh(\pi\tau)\Psi(\tau,\mu)d\tau\big)}.
\end{eqnarray*}
This establishes \eqref{4.10}.
\end{proof}
\end{theorem}

\section{The Mehler-Fock convolution of arbitrary order}
Some important contributions to the study of convolution for the MFT have been made by \cite{AP2,per,Ha1,Glaeske}. However, the MFT of arbitrary order as well as convolution associated with the MFT of arbitrary order have still not been explored in any functional spaces. In this section, we define the translation and convolution operators associated with the MFT of arbitrary order. Moreover, we demonstrate that the factorization property for this convolution holds in the space $\mathbb{H}_\mu$. Let $\mu\in \mathbb{C}$ and $k$ be any positive integer such that $\mathfrak{R}(\mu+k)>{-\frac{1}{2}}$. Then for $f\in \mathbb{H}_\mu(\mathbb{I})$, the translation operator $(T_y f)(x)$ for the MFT of arbitrary order is defined by 
\begin{align}
(T_x f)(y)=(T_y f)(x)&=(-1)^{\frac{k}{2}}\frac{1}{\pi}\int_0^\infty \int_1^\infty P_{-\frac{1}{2}+i\tau}^{-(\mu+k)}(z)(z^2-1)^\frac{\mu+k}{2} D_z^k f(z) \nonumber\\ 
&\times\tau\sinh(\pi\tau)\Psi(\tau, \mu)P_{-\frac{1}{2}+i\tau}^{-\mu}(x)P_{-\frac{1}{2}+i\tau}^{-\mu}(y) dzd\tau, \label{5.1}
\end{align}
and the convolution operator $h(x)$ for the MFT of arbitrary order of functions $f\in \mathbb{H}_\mu(\mathbb{I}) $ and $ g\in \mathbb{H}_\mu(\mathbb{I}) $ is given by
\begin{align}
	h(x)&=(-1)^{k}\frac{1}{\pi}\int_0^\infty \int_1^\infty\int_1^\infty P_{-\frac{1}{2}+i\tau}^{-(\mu+k)}(z)(z^2-1)^\frac{\mu+k}{2}D_z^kf(z)~P_{-\frac{1}{2}+i\tau}^{-(\mu+k)}(y) \nonumber\\ 
	&\times (y^2-1)^\frac{\mu+k}{2}D_y^k g(y) ~\tau \sinh(\pi\tau)\Psi(\tau, \mu)P_{-\frac{1}{2}+i\tau}^{-\mu}(x) dz dy d\tau. \label{5.2}
\end{align}
\begin{theorem}
	Let $\mu$ be a complex number and $k$ a positive integer such that $\mathfrak{R}(\mu+k)>{-\frac{1}{2}}$. Then for $f, g\in\mathbb{H}_\mu(\mathbb{I})$, we have
	\begin{eqnarray}
		(i)~[\mathfrak{MF}_{\mu,k}(T_yf)(x)](\tau)&=&P_{-\frac{1}{2}+i\tau}^{-\mu}(y)~[\mathfrak{MF}_{\mu,k}\big((z^2-1)^{\frac{\mu}{2}}f(z)\big)](\tau),\label{5.3}
	\end{eqnarray}
	\begin{eqnarray}
		(ii)~[\mathfrak{MF}_{\mu,k} h(x)](\tau)=[\mathfrak{MF}_{\mu,k}\big((z^2-1)^{\frac{\mu}{2}}f(z)\big)](\tau)~
		[\mathfrak{MF}_{\mu,k}\big((y^2-1)^{\frac{\mu}{2}}g(y)\big)](\tau).\label{5.4}
	\end{eqnarray}
	\begin{proof}
		Clearly 
		\begin{align}
			(T_y f)(x)=(-1)^{\frac{k}{2}}\bigg[\mathfrak{MF}^{-1}_\mu \Big(\int_1^\infty P_{-\frac{1}{2}+i\tau}^{-(\mu+k)}(z)(z^2-1)^\frac{\mu+k}{2} D_z^k f(z)P_{-\frac{1}{2}+i\tau}^{-\mu}(y) dz\Big)\bigg](x). \label{5.5}
		\end{align}
	According to \cite[Theorem 2.8]{AP2}, the following result holds
			\begin{eqnarray}
			\big(\mathfrak{MF}_{\mu+k}N_{\mu+k-1}\cdots N_\mu(T_yf)(x)\big)(\tau)=(-1)^{-\frac{k}{2}}\big(\mathfrak{MF}_{\mu}(T_yf)(x)\big)(\tau).\label{5.6}
		\end{eqnarray}	
	By applying \eqref{5.5} to the right-hand side of \eqref{5.6}, we obtain
		\begin{eqnarray*}
		&&\big(\mathfrak{MF}_{\mu+k}N_{\mu+k-1}\cdots N_\mu(T_yf)(x)\big)(\tau)\\
		&&=P_{-\frac{1}{2}+i\tau}^{-\mu}(y)\int_1^\infty P_{-\frac{1}{2}+i\tau}^{-(\mu+k)}(z)(z^2-1)^\frac{\mu+k}{2} D_z^k f(z)dz\\
		&&=P_{-\frac{1}{2}+i\tau}^{-\mu}(y) \big[\mathfrak{MF}_{\mu+k}\big((z^2-1)^\frac{\mu+k}{2}D_z^k f(z)\big)\big](\tau)\\
		&&=P_{-\frac{1}{2}+i\tau}^{-\mu}(y) \big[\mathfrak{MF}_{\mu+k}N_{\mu+k-1}\cdots N_\mu\big((z^2-1)^\frac{\mu}{2}f(z)\big)\big](\tau)\\
		&&=(-1)^{-\frac{k}{2}}P_{-\frac{1}{2}+i\tau}^{-\mu}(y)[\mathfrak{MF}_{\mu,k}\big((z^2-1)^{\frac{\mu}{2}}f(z)\big)](\tau).
		\end{eqnarray*}	
	Hence, we get 
	\begin{eqnarray*}
	[\mathfrak{MF}_{\mu,k}(T_yf)(x)](\tau)&=&P_{-\frac{1}{2}+i\tau}^{-\mu}(y)~[\mathfrak{MF}_{\mu,k}\big((z^2-1)^{\frac{\mu}{2}}f(z)\big)](\tau).
		\end{eqnarray*}	
	
	(ii) Clearly 
	\begin{eqnarray}
		h(x)&=&(-1)^k\bigg[\mathfrak{MF}^{-1}_\mu \Big(\int_1^\infty\int_1^\infty P_{-\frac{1}{2}+i\tau}^{-(\mu+k)}(z)(z^2-1)^\frac{\mu+k}{2}D_z^kf(z)\nonumber\\
		 &&\times P_{-\frac{1}{2}+i\tau}^{-(\mu+k)}(y)  (y^2-1)^\frac{\mu+k}{2}D_y^k g(y) dzdy\Big)\bigg](x). \label{5.7}
	\end{eqnarray}
According to \cite[Theorem 2.8]{AP2}, the following result holds
	\begin{eqnarray}
		\big(\mathfrak{MF}_{\mu+k}N_{\mu+k-1}\cdots N_\mu h(x)\big)(\tau)=(-1)^{-\frac{k}{2}}\big(\mathfrak{MF}_{\mu}h(x)\big)(\tau).\label{5.8}
	\end{eqnarray}	
Now using \eqref{5.7} to the right-hand side of \eqref{5.8}, we have
\begin{eqnarray*}
	&&\big(\mathfrak{MF}_{\mu+k}N_{\mu+k-1}\cdots N_\mu h(x)\big)(\tau)\\
	&&=(-1)^{\frac{k}{2}}\int_1^\infty\int_1^\infty P_{-\frac{1}{2}+i\tau}^{-(\mu+k)}(z)(z^2-1)^\frac{\mu+k}{2}D_z^kf(z)\\
	&&\hspace{0.5cm}\times P_{-\frac{1}{2}+i\tau}^{-(\mu+k)}(y)  (y^2-1)^\frac{\mu+k}{2}D_y^k g(y) dzdy\\
	&&=(-1)^{\frac{k}{2}} \big[\mathfrak{MF}_{\mu+k}\big((z^2-1)^\frac{\mu+k}{2}D_z^k f(z)\big)\big](\tau)~\big[\mathfrak{MF}_{\mu+k}\big((y^2-1)^\frac{\mu+k}{2}D_y^k g(y)\big)\big](\tau)\\
	&&=(-1)^{\frac{k}{2}} \big[\mathfrak{MF}_{\mu+k}N_{\mu+k-1}\cdots N_\mu\big((z^2-1)^\frac{\mu}{2}f(z)\big)\big](\tau)\\
	&&\hspace{0.5cm}\times \big[\mathfrak{MF}_{\mu+k}N_{\mu+k-1}\cdots N_\mu\big((y^2-1)^\frac{\mu}{2}g(y)\big)\big](\tau)\\
	&&=(-1)^{-\frac{k}{2}}[\mathfrak{MF}_{\mu,k}\big((z^2-1)^{\frac{\mu}{2}}f(z)\big)](\tau)~[\mathfrak{MF}_{\mu,k}\big((y^2-1)^{\frac{\mu}{2}}g(y)\big)](\tau).
\end{eqnarray*}	
Hence, we get 
\begin{eqnarray*}
[\mathfrak{MF}_{\mu,k} h(x)](\tau)=[\mathfrak{MF}_{\mu,k}\big((z^2-1)^{\frac{\mu}{2}}f(z)\big)](\tau)~
[\mathfrak{MF}_{\mu,k}\big((y^2-1)^{\frac{\mu}{2}}g(y)\big)](\tau).
\end{eqnarray*}
		\end{proof}
\end{theorem}
\section{Applications}
\begin{problem} With the help of generalized Mehler-Fock transform $\mathfrak{MF}_{\mu}^{\prime}$ and Theorem 4.4, one can solve the differential equation of the type
\begin{eqnarray}
	P(M_\mu N_\mu)v=g,\label{6.1}
\end{eqnarray}
where $P(\cdot)$ be any polynomial having no zeroes in the interval $\bigg(-\infty,-\Big(\mu+\frac{1}{2}\Big)^2 \bigg)$, $g\in \mathbb{H}_{\mu}^{\prime}(\mathbb{I})$, $v$ is unknown but required to be in $\mathbb{H}_{\mu}^{\prime}(\mathbb{I})$ and $\mu$ be any complex number. 
	\end{problem}
 Applying the generalized Mehler-Fock transform $\mathfrak{MF}_{\mu}^{\prime}$ to \eqref{6.1} and using Theorem 4.4, we may write
\begin{eqnarray}
	P\bigg(-\Big(\mu+\frac{1}{2}\Big)^2-\tau^2\bigg)V(\tau)=G(\tau),\label{6.2}
\end{eqnarray}
where, $V(\tau)=\mathfrak{MF}_{\mu}^{\prime}v$, and $G(\tau)=\mathfrak{MF}_{\mu}^{\prime}g.$ Therefore if $	P\bigg(-\Big(\mu+\frac{1}{2}\Big)^2-\tau^2\bigg)\neq 0$, we can divide both sides of \eqref{6.2} by $P\bigg(-\Big(\mu+\frac{1}{2}\Big)^2-\tau^2\bigg)$ and subsequently apply the generalized inverse Mehler-Fock transform $(\mathfrak{MF}_{\mu}^{\prime})^{-1}$ to obtain the solution
 \begin{eqnarray}
 v=(\mathfrak{MF}_{\mu}^{\prime})^{-1}\frac{G(\tau)}{	P\bigg(-\Big(\mu+\frac{1}{2}\Big)^2-\tau^2\bigg)}.\label{6.3}
 \end{eqnarray}
This means that for each member $\phi$ of $\mathbb{H}_{\mu}(\mathbb{I})$ and $\Phi=\mathfrak{MF}_{\mu,k}\phi$, the unknown $v\in\mathbb{H}_{\mu}^{\prime}(\mathbb{I})$ is given (in view of \eqref{3.4}) by
 \begin{eqnarray*}
	\langle v,\phi\rangle_{(\mathbb{I}; dx)} &=&\bigg\langle (\mathfrak{MF}_{\mu}^{\prime})^{-1}\frac{G(\tau)}{	P\Big(-\Big(\mu+\frac{1}{2}\Big)^2-\tau^2\Big)},\phi\bigg\rangle_{\big(\mathbb{I}; dx\big)}
    \end{eqnarray*}
    \begin{eqnarray*}
	&=&\bigg\langle G(\tau),\frac{\Phi(\tau)}{P\Big(-\Big(\mu+\frac{1}{2}\Big)^2-\tau^2\Big)}\bigg\rangle_{\big(\mathbb{R}_+;\frac{1}{\pi}\tau \sinh(\pi\tau)\Psi(\tau,\mu)d\tau\big)}\\
   &=&\bigg\langle g(x),\mathfrak{MF}^{-1}_{\mu,k}\frac{\Phi(\tau)}{P\Big(-\Big(\mu+\frac{1}{2}\Big)^2-\tau^2\Big)}\bigg\rangle_{\big(\mathbb{I}; dx\big)}\\
   &=&\bigg\langle g(x), (-1)^{\frac{k}{2}}N_\mu^{-1}\cdots N_{\mu+k-1}^{-1}\mathfrak{MF}_{\mu+k}^{-1} \frac{\Phi(\tau)}{P\Big(-\Big(\mu+\frac{1}{2}\Big)^2-\tau^2\Big)}\bigg\rangle_{\big(\mathbb{I}; dx\big)}.
	\end{eqnarray*}

\begin{problem}  Let $\mu\in \mathbb{C}$ and $k\in \mathbb{N}$ such that $\mathfrak{R}(\mu+k)>{-\frac{1}{2}}$. Then for $l>1$, we have 
\begin{eqnarray}
	\big(\mathfrak{MF}_{\mu,k}\delta(x-l)\big)(\tau)=P_{-\frac{1}{2}+i\tau}^{-\mu}(l) .\label{6.4}
\end{eqnarray}
\end{problem}
\begin{proof}
	Using \eqref{3.1}, we get
	\begin{eqnarray*}
		&&\big(\mathfrak{MF}_{\mu,k}\delta(x-l)\big)(\tau)\\
		&&=(-1)^{\frac{k}{2}}\big(\mathfrak{MF}_{\mu+k}N_{\mu+k-1}\cdots N_\mu \delta(x-l)\big)(\tau)\\
		&&=(-1)^{\frac{k}{2}}\int_1^\infty P_{-\frac{1}{2}+i\tau}^{-(\mu+k)}(x)(x^2-1)^{\frac{\mu+k}{2}}D_x^k(x^2-1)^{-\frac{\mu}{2}}\delta(x-l)dx.
	\end{eqnarray*}
Integrating by parts and using \eqref{2.4}, we get
	\begin{eqnarray*}
	&&\big(\mathfrak{MF}_{\mu,k}\delta(x-l)\big)(\tau)\\
	&&=(-1)^{\frac{k}{2}}(-1)^{1+{\frac{1}{2}}}\int_1^\infty P_{-\frac{1}{2}+i\tau}^{-(\mu+k-1)}(x)(x^2-1)^{\frac{\mu+k-1}{2}}D_x^{k-1}(x^2-1)^{-\frac{\mu}{2}}\delta(x-l)dx.
\end{eqnarray*}
Repeating this process $k$ times, we get the desired result \eqref{6.4}.
	\end{proof}
\begin{problem} 
Using the Mehler-Fock transform of arbitrary order, we analyze the solution of generalized Heat equation 
	\begin{eqnarray}
		\frac{\partial \phi}{\partial t}=K A_\mu\phi(x,t), \quad x\in(1,\infty),~ t>0, \label{6.6}
	\end{eqnarray}
\begin{eqnarray}
\phi(x,0)=f(x), \quad x\in(1,\infty). \label{6.7}
\end{eqnarray}
\end{problem}
Let us consider $\big(\mathfrak{MF}_{\mu,k}\phi(x,t)\big)(\tau,t)=\Phi(\tau,t)$. Then in view of \eqref{4.6}, \eqref{6.6} becomes
\begin{eqnarray}
	\frac{d}{dt}\Phi(\tau,t)+K \bigg[\Big(\mu+\frac{1}{2}\Big)^2+\tau^2\bigg]\Phi(\tau,t)=0. \label{6.8}
\end{eqnarray}
The solution of \eqref{6.8} is given by 
\begin{eqnarray}
\Phi(\tau,t)=C~ e^{{-K}\big[(\mu+\frac{1}{2})^2+\tau^2\big]t}. \label{6.9}
\end{eqnarray}
Similarly from \eqref{6.7}, we have
\begin{eqnarray}
	\Phi(\tau,0)=F(\tau). \label{6.10}
\end{eqnarray}
Setting $t=0$ in \eqref{6.9} and using \eqref{6.10}, we have $C=F(\tau).$ Hence \eqref{6.9} becomes
\begin{eqnarray*}
	\Phi(\tau,t)=F(\tau) e^{{-K}\big[(\mu+\frac{1}{2})^2+\tau^2\big]t}.
\end{eqnarray*}
Taking inverse Mehler-Fock transform of arbitrary order and using Lemma 3.3, we get the formal solution 
 \begin{eqnarray*}
 	\phi(x,t)=\mathfrak{MF}_{\mu}^{-1}\bigg(F(\tau) e^{{-K}\big[(\mu+\frac{1}{2})^2+\tau^2\big]t}\bigg).
 \end{eqnarray*}
\\
{\bf Data Availability:} The manuscript has no associated data.\\ \\
{\bf Declarations:\\
Conflict of interest:} No ptential conflict of interest was reported by the authors.

\bibliographystyle{amsplain}

\begin{thebibliography}{99}

\bibitem{bry}
Y.A. Brychkov, 
Handbook of Special Functions: Derivatives, Integrals, Series and Other Formulas, 
CRC Press, Boca Raton, 2008.

\bibitem{fock}
V.A. Fock, 
On the representation of an arbitrary function by integrals involving Legendre functions with a complex index, 
Dokl. AN SSSR 39 (7) (1943) 279--283 (in Russian).

\bibitem{per}
H.J. Glaeske, A. Hess, 
On the convolution theorem of the Mehler--Fock transform for a class of generalized functions (I), 
Math. Nachr. 131 (1) (1987) 107--117.

\bibitem{Glaeske}
H.J. Glaeske, A. Hess, 
On the convolution theorem of the Mehler--Fock transform for a class of generalized functions (II), 
Math. Nachr. 136 (1) (1988) 119--129.

\bibitem{gonz1}
B.J. Gonz\'alez, E.R. Negr\'in, 
Abelian theorems for distributional Kontorovich--Lebedev and Mehler--Fock transforms of general order, 
Banach J. Math. Anal. 13 (3) (2019) 524--537.

\bibitem{gonz3}
B.J. Gonz\'alez, E.R. Negr\'in, 
$L_p$-inequalities and Parseval-type relations for the Mehler--Fock transform of general order, 
Ann. Funct. Anal. 8 (2) (2017) 231--239.

\bibitem{Bn1}
B.J. Gonz\'alez, E.R. Negr\'in, 
Mehler--Fock transforms of generalized functions via the method of adjoints, 
Proc. Amer. Math. Soc. 125 (11) (1997) 3243--3253.

\bibitem{Gupt}
A.K. Gupt, U.K. Mandal, A. Prasad, 
Lebedev--Skalskaya transform related continuous wavelet transform, 
Results Math. 79 (3) (2024) 1--21.

\bibitem{Ha1}
N. Hayek, B.J. Gonz\'alez, 
Convolution for the Mehler--Fock transform of generalized functions, 
Integral Transforms Spec. Funct. 9 (4) (2000) 257--270.

\bibitem{Ha2}
N. Hayek, B.J. Gonz\'alez, 
On the Mehler--Fock transform of generalized functions, 
Bull. Soc. Roy. Sci. Li\`ege 61 (3--4) (1992) 315--327.

\bibitem{low}
J.S. Lowndes, 
Note on the generalized Mehler transform, 
Proc. Cambridge Philos. Soc. 60 (1) (1964) 57--59.

\bibitem{Maan}
J. Maan, E.R. Negr\'in, 
Index Whittaker transforms over Lebesgue spaces, 
J. Pseudo-Differ. Oper. Appl. 16 (1) (2025) 1--15.

\bibitem{mehler}
F.G. Mehler, 
Ueber eine mit den kugel-und cylinderfunctionen verwandte function und ihre anwendung in der theorie der elektricitatsvertheilung, 
Math. Ann. 18 (1881) 161--194.


\bibitem{AP1}
A. Prasad, U.K. Mandal, S. Ranjan, 
The $\mu$th order Mehler--Fock transform and allied pseudo-differential operator, 
Integral Transforms Spec. Funct. 36 (3) (2024) 163--175. 

\bibitem{AP2}
A. Prasad, S. Ranjan, 
An operational calculus of $\mu$th order Mehler--Fock transform, 
Complex Anal. Oper. Theory 19 (2) (2025), Article 41, 1--33.

\bibitem{PS}
A. Prasad, S.K. Verma, U.K. Mandal, 
The convolution for zero-order Mehler--Fock transform and pseudo-differential operator, 
Integral Transforms Spec. Funct. 29 (3) (2018) 189--206.

\bibitem{Sn}
I.N. Sneddon, 
The Use of Integral Transforms, 
McGraw--Hill, New York, 1972.

\bibitem{Hm}
H.M. Srivastava, B.J. Gonz\'alez, E.R. Negr\'in, 
New $L^p$-boundedness properties for the Kontorovich--Lebedev and Mehler--Fock transforms, 
Integral Transforms Spec. Funct. 27 (10) (2016) 835--845.

\bibitem{Yaku2}
S. Yakubovich, 
An index integral and convolution operator related to the Kontorovich--Lebedev and Mehler--Fock transforms, 
Complex Anal. Oper. Theory 6 (4) (2012) 947--970.

\bibitem{index}
S.B. Yakubovich, 
Index Transforms, 
World Scientific, Singapore, 1996.

\end{thebibliography}

\end{document}